\def \[{\begin{equation}}
\def \]{\end{equation}}
\newtheorem{thm}{Theorem}[section]
\newtheorem{prop}{Proposition}
\newtheorem{lem}[thm]{Lemma}
\newtheorem{cor}[thm]{Corollary}
\newenvironment{wst}
{\setlength{\leftmargini}{1.5\parindent}
 \begin{itemize}
 \setlength{\itemsep}{-1.1mm}}
{\end{itemize}}
\begin{document}
\title{\bf The list-coloring function of signed graphs}
\author[a]{Sumin Huang\thanks{Email: sumin2019@sina.com (S.M.~Huang)}}
\author[a]{Jianguo Qian\thanks{Corresponding author, email: jgqian@xmu.edu.cn (J.G,~Qian)}}
\author[b]{Wei Wang}
\affil[a]{School of Mathematical Sciences, Xiamen
University, Xiamen 361005, P.R. China}
\affil[b]{School of Mathematics, Physics and Finance, Anhui Polytechnic University, \authorcr \it Wuhu 241000, P.R. China}
\date{}
\maketitle

\begin{abstract}
It is known that, for any $k$-list assignment $L$ of a graph $G$,  the number of $L$-list colorings of $G$ is at least the number of the proper $k$-colorings of $G$ when  $k>(m-1)/\ln(1+\sqrt{2})$.  In this paper, we extend the Whitney's broken cycle theorem to $L$-colorings of signed graphs, by which we show that if $k> \binom{m}{3}+\binom{m}{4}+m-1$ then, for any $k$-assignment $L$, the number of $L$-colorings of a signed graph $\Sigma$ with $m$ edges is at least the number of the proper $k$-colorings of $\Sigma$. Further, if $L$ is $0$-free (resp., $0$-included) and  $k$ is even (resp., odd), then the lower bound $\binom{m}{3}+\binom{m}{4}+m-1$ for $k$ can be improved to $(m-1)/\ln(1+\sqrt{2})$.
\end{abstract}

\vspace{2mm} \noindent{\bf Keywords}: signed graph; list-coloring function; broken cycle theorem; chromatic polynomial
\vspace{2mm}

\setcounter{section}{0}
\section{Introduction}\setcounter{equation}{0}

A {\it signed graph} $\Sigma=(G,\sigma)$ consists of an underlying graph $G$ together with a sign function $\sigma:E(G)\to \{-1,1\}$.  An edge $e$ is \textit{negative} if $\sigma(e)=-1$ and is \textit{positive} otherwise. We call a cycle in $\Sigma$ {\it unbalanced} or {\it balanced} if it has an odd or even number of negative edges, respectively. Further, we say that a signed graph is {\it unbalanced} if it contains an unbalanced cycle and is {\it balanced} otherwise.

In 1982, Zaslavsky \cite{Zaslavsky} introduced the notion of the (signed) coloring for signed graphs. For a positive integer $k$,  denote $M_k=\{0, \pm 1, \pm 2, \ldots, \pm t\}$ if $k=2t+1$ and $M_k=\{\pm 1, \pm 2, \ldots, \pm t\}$ if $k=2t$. A {\it proper $k$-coloring}, or {\it $k$-coloring} for short, of a signed graph $\Sigma$ is a mapping $c$ from $V(\Sigma)$ to $M_k$ such that $c(u)\neq \sigma(e)c(v)$ for each edge $e=uv$. Let $P(\Sigma, k)$ denote the number of proper $k$-colorings of  $\Sigma$. By the definition, one can see that the color $0$ plays a very special role since it is self-inverse.  Hence, in contrast to ordinary graphs (i.e., unsigned graphs),  the number of proper $k$-colorings of a signed graph is not a polynomial in general. Indeed, Zaslavsky \cite{Zaslavsky} proved that $P(\Sigma, k)$ is a quasi-polynomial of period two, that is, $P(\Sigma, 2t+1)$ and $P(\Sigma, 2t)$ are both polynomials in $t$. For convenience, we write  $P(\Sigma, k)$ specifically by $P^1(\Sigma, k)$ when $k$ is odd and by $P^0(\Sigma, k)$ when $k$ is even.

For unsigned graphs, the notion of list-coloring was introduced by Erd\H{o}s, Rubin and Taylor in \cite{Erdos}, which can be naturally extended to an analog for signed graphs.  A  {\it list-assignment} $L$ of a signed graph $\Sigma$ is a mapping from every vertex  $v$ of $\Sigma$ to a nonempty set $L(v)$ of permissible colors in $\mathbb{Z}$. For a positive integer $k$, if $|L(v)|=k$ for every $v\in V(\Sigma)$, then we call $L$ a $k$-{\it assignment}. An $L$-{\it coloring} of $\Sigma$ is a proper coloring $c$ such that $c(v)\in L(v)$ for all $v\in V(\Sigma)$. Let $P(\Sigma, L)$ denote the number of $L$-colorings of $\Sigma$. The {\it list-coloring function} of $\Sigma$, denoted by  $P_l(\Sigma, k)$, is the minimum  number of $L$-colorings over all  $k$-assignments $L$, i.e., $P_l(\Sigma,k)=\min \{P(\Sigma, L): L\text{ is a $k$-assignment}\}$.
Let $L^*_k$ be the $k$-assignment such that $L^*_k(v)=M_k$ for every vertex $v$ in $\Sigma$. Hence, $P_l(\Sigma,k)\leq P(\Sigma,L^*_k)=P(\Sigma,k)$. Moreover, we note that the list-coloring of signed graphs is a natural extension of that of unsigned graphs since a signed graph $\Sigma=(G,\sigma)$ without negative edge can be viewed as the unsigned graph $G$.

For an unsigned graph $G$,  Kostochka and Sidorenko \cite{Kostochka} showed that if $G$ is a chordal graph then $P_l(G,k)=P(G,k)$ for every positive integer $k$. This leads to  a nature and interesting question: When does $P_l(G,k)$ equal $P(G,k)$? In 1992, Donner  \cite{Donner} showed that $P_l(G,k)=P(G,k)$ when $k$ is sufficiently large (compared with the
number of vertices). Later in 2009, Thomassen  \cite{Thomassen} specified the `sufficiently large $k$' by
 `$k>n^{10}$', which  was improved further to  `$k>(m-1)/\ln (1+\sqrt{2})$' by two of the present
authors \cite{Wang}, where $n$ and $m$ are the numbers of the vertices and edges in $G$, respectively.

 In contrast to  unsigned graphs, there seems to be very few results on the number of the list colorings for  signed graphs.  In this paper, by extending the Whitney's broken cycle theorem to signed graphs, we prove the following result:

\begin{thm}\label{th0}
Let $\Sigma$ be a signed graph with $m$ edges. If $k> \binom{m}{3}+\binom{m}{4}+m-1$, then
$$P_l(\Sigma,k)=P(\Sigma,k).$$
Further, the list-assignment $L$ that attains $P(\Sigma,k)$ satisfies $L(u)=\sigma(e)L(v)$ for each edge $e=uv$. In particular, if $\Sigma$ is unbalanced, then $L(v)=-L(v)=\{-a:a\in L(v)\}$ for each vertex $v$ in any unbalanced component of $\Sigma$.
\end{thm}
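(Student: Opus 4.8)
The plan is to prove the two inequalities $P_l(\Sigma,k)\le P(\Sigma,k)$ and $P_l(\Sigma,k)\ge P(\Sigma,k)$ separately; the first is immediate by taking the symmetric assignment $L^*_k$, so everything reduces to showing $P(\Sigma,L)\ge P(\Sigma,k)=P(\Sigma,L^*_k)$ for every $k$-assignment $L$. I would start from the inclusion--exclusion expansion $P(\Sigma,L)=\sum_{S\subseteq E(\Sigma)}(-1)^{|S|}N(S,L)$, where $N(S,L)$ counts the maps $c$ with $c(v)\in L(v)$ satisfying $c(u)=\sigma(e)c(v)$ for every $e=uv\in S$. The key structural fact is that $N(S,L)$ factors over the components of $(V,S)$: a balanced component $T$ contributes $\bigl|\bigcap_{v\in T}s_T(v)L(v)\bigr|$, where $s_T\colon T\to\{\pm1\}$ is the switching function fixed by the edges of $S$ inside $T$, whereas an unbalanced component forces every colour to $0$ and hence contributes $1$ or $0$ according as $0$ lies in all of its lists or not. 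Specialising to $L=L^*_k$ (each list the symmetric set $M_k$) recovers $N(S,L^*_k)=k^{b(S)}$ times the appropriate $0/1$-factors, i.e.\ Zaslavsky's quasi-polynomial.

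Next I would prove the signed broken cycle theorem. Fixing a linear order of $E(\Sigma)$, I would pair any $S$ containing the broken cycle of a balanced cycle $C$ with $S\,\triangle\,\{e_C\}$, where $e_C$ is the largest edge of $C$: because $C$ is balanced, adjoining $e_C$ to a set already containing the rest of $C$ changes neither the components nor the switching $s_T$, so $N$ is preserved and the pair cancels in the alternating sum. This collapses the expansion onto the subsets containing no broken cycle of a balanced cycle; such subsets contain no balanced cycle at all, and since any theta-subgraph carries a balanced cycle (the three cycle-balances multiply to $+1$), each of their blocks is a bridge or an unbalanced cycle. \emph{The main obstacle is precisely here}: for an unbalanced cycle the same pairing fails, because adjoining its top edge collapses the component onto the single colour $0$ and alters $N$. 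These surviving unbalanced cycles are exactly what is absent from the unsigned theory and what drives the threshold up.

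I would then analyse $P(\Sigma,L)-P(\Sigma,k)=\sum_{S}(-1)^{|S|}\bigl(N(S,L)-N(S,L^*_k)\bigr)$ over this surviving family. The empty set cancels; writing $d_e=k-|L(u)\cap\sigma(e)L(v)|\ge0$ for $e=uv$ and $n=|V(\Sigma)|$, the single-edge terms sum to the nonnegative, order-$k^{\,n-2}$ quantity $k^{\,n-2}\sum_{e}d_e$, while every set with $|S|\ge2$ and every set carrying a short unbalanced cycle produces only lower-order discrepancies. The delicate (but essentially routine) bookkeeping is to show that each such discrepancy is itself controlled by the $d_e$ on its edges --- in particular an unbalanced-cycle term vanishes once its edge-lists are switching-consistent --- and that summing over the relevant three- and four-edge configurations bounds the total negative mass by $\tfrac1k\bigl(\tbinom{m}{3}+\tbinom{m}{4}+m-1\bigr)$ times the positive single-edge mass. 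This yields the master estimate
$$P(\Sigma,L)-P(\Sigma,k)\ \ge\ k^{\,n-2}\Bigl(1-\tfrac{1}{k}\bigl(\tbinom{m}{3}+\tbinom{m}{4}+m-1\bigr)\Bigr)\sum_{e}d_e,$$
which is $\ge0$ as soon as $k>\binom{m}{3}+\binom{m}{4}+m-1$, giving $P_l(\Sigma,k)=P(\Sigma,k)$.

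Finally, the characterization of the minimizers is read off from the same estimate. The right-hand side is strictly positive unless $\sum_e d_e=0$, so $P(\Sigma,L)=P(\Sigma,k)$ forces $d_e=0$, i.e.\ $L(u)=\sigma(e)L(v)$, for every edge $e=uv$, which is assertion (b); no cancellation among higher-order terms can restore equality because the stated bound keeps the parenthesised factor positive. Propagating the relation $L(u)=\sigma(e)L(v)$ around any unbalanced cycle multiplies the edge signs to $-1$ and gives $L(v)=-L(v)$ at one vertex, hence---by connectivity---at every vertex of the unbalanced component, which is assertion (c).
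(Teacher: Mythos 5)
Your proposal is correct and follows essentially the same route as the paper: the inclusion--exclusion expansion with the balanced/unbalanced component factorization, a signed broken-cycle cancellation, extraction of the first-order mass $k^{n-2}\sum_e d_e$ from the single-edge terms, and crude $\binom{m}{3}$ and $\binom{m}{4}$ bounds on the three- and four-edge configurations, yielding the same threshold. The only cosmetic differences are that you omit barbells from the circuit family (harmless here, since for simple graphs a broken barbell has at least five edges and so does not affect the levels $i\le 4$ actually used) and that you fold the paper's final dichotomy between $\sum_e\alpha(e,L)=0$ and $\sum_e\alpha(e,L)\ge 1$ into a single master inequality.
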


Further, we show that the lower bound `$\binom{m}{3}+\binom{m}{4}+m-1$' of $k$ in Theorem \ref{th0} can be improved for two particular types of list assignments, which give a partial extension of the corresponding result on unsigned graphs \cite{Wang} to signed graphs.  A list-assignment $L$ of $\Sigma$ is $0$-{\it free} if $0\notin L(v)$ for any $v\in V(\Sigma)$, and is $0$-{\it included} if $0\in L(v)$ for any $v\in V(\Sigma)$. Let $P_l^0(\Sigma,k)=\min \{P(\Sigma, L): L\text{ is a $0$-free $k$-assignment}\}$ and $P_l^1(\Sigma,k)=\min \{P(\Sigma, L): L\text{ is a $0$-included $k$-assignment}\}$. Since $0\notin M_{2t}$ and $0\in M_{2t+1}$, we have $P_l^0(\Sigma, 2t)\leq P^0(\Sigma, 2t)$ and $P_l^1(\Sigma, 2t+1)\leq P^1(\Sigma, 2t+1)$.

\begin{thm}\label{th1}
Let $\Sigma$ be a signed graph with $m$ edges. If
$$k> \frac{m-1}{\ln (1+\sqrt{2})},$$
then $P_l^0(\Sigma,k)=P^0(\Sigma,k)$ if $k$ is even and $P_l^1(\Sigma,k)=P^1(\Sigma,k)$ if $k$ is odd. Further, the list-assignment $L$ that attains the equality satisfies $L(v)=\sigma(e)L(u)$ for each edge $e=uv$. In particular, if $\Sigma$ is unbalanced, then $L(v)=-L(v)$ for each vertex $v$ in any unbalanced component of $\Sigma$.
\end{thm}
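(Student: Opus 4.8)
The plan is to run the argument of \cite{Wang} through the signed broken cycle theorem, using the sign restriction on $L$ to neutralize the anomalous role of the color $0$. First I would record the inclusion--exclusion/broken cycle expansion in the signed setting: for any $k$-assignment $L$,
\[
P(\Sigma,L)=\sum_{H}(-1)^{|H|}N_L(H),
\]
where the sum ranges over the broken-cycle-free edge subsets $H\subseteq E(\Sigma)$ and $N_L(H)$ counts the colorings $c\in L$ that satisfy $c(u)=\sigma(e)c(v)$ on every edge $e=uv\in H$. The quantity $N_L(H)$ factors over the connected components of the spanning subgraph $(V(\Sigma),H)$: after fixing a spanning tree of a component $C$ and the resulting switching signs $\epsilon_v\in\{-1,1\}$, a balanced component contributes $|\bigcap_{v\in C}\epsilon_v L(v)|$, while an unbalanced component contributes $1$ if $0\in\bigcap_{v\in C}L(v)$ and $0$ otherwise. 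Taking $L=L^*_k$ (the uniform list $M_k$) gives the companion expansion of $P^0(\Sigma,k)$ or $P^1(\Sigma,k)$, in which each balanced component contributes $k$ and each unbalanced component contributes $[0\in M_k]$.

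Next I would exploit the sign hypothesis to match the unbalanced contributions. If $L$ is $0$-free and $k$ is even then $0\notin M_k$, so every subset $H$ possessing an unbalanced component contributes $0$ to both expansions; if $L$ is $0$-included and $k$ is odd then $0\in M_k$, so such a component contributes the factor $1$ to both. In either case the unbalanced parts cancel term by term in the difference $P(\Sigma,L)-P^\epsilon(\Sigma,k)$, which therefore reduces to an alternating sum, over broken-cycle-free $H$, of $\prod_C|\bigcap_{v\in C}\epsilon_vL(v)|-k^{\,b(H)}$ taken over the balanced components $C$, where $b(H)$ is their number. This is precisely the point at which the pairing of $0$-free with even $k$ and of $0$-included with odd $k$ is forced.

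The heart of the matter is then to recognize this residual sum as an instance of the unsigned comparison treated in \cite{Wang}. Switching each balanced component to be all-positive identifies $|\bigcap_{v\in C}\epsilon_vL(v)|$ with the ordinary list-intersection of the switched lists $\{\epsilon_vL(v)\}$ and $k^{\,b(H)}$ with the uniform count, so the residual sum has exactly the form for which \cite{Wang} proves nonnegativity once $k>(m-1)/\ln(1+\sqrt2)$. Invoking that inequality gives $P(\Sigma,L)\ge P^\epsilon(\Sigma,k)$; since $M_k$ is itself a $0$-free (resp.\ $0$-included) assignment when $k$ is even (resp.\ odd), the reverse inequality $P_l^\epsilon(\Sigma,k)\le P^\epsilon(\Sigma,k)$ is immediate, and equality follows. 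The equality case of the bound in \cite{Wang} forces all switched lists within each component to coincide, i.e.\ $L(v)=\sigma(e)L(u)$ along every edge $e=uv$; traversing an unbalanced cycle then yields $L(v)=-L(v)$ on every vertex of an unbalanced component, which is the asserted structure of the extremal $L$.

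I expect the main obstacle to lie not in the analytic estimate---this is imported wholesale from \cite{Wang}---but in the bookkeeping of the first two steps: one must verify that the signed broken cycle expansion really does split cleanly into balanced and unbalanced component factors, that the switching signs $\epsilon_v$ are well defined up to a global sign that does not affect $|\bigcap_{v\in C}\epsilon_vL(v)|$, and that after the reduction the surviving sum is genuinely the unsigned sum on each balanced piece, so that the threshold $(m-1)/\ln(1+\sqrt2)$, which depends only on the edge count $m$, transfers without loss. Making the cancellation of the unbalanced contributions precise for \emph{every} broken-cycle-free $H$ is the step I would treat most carefully.
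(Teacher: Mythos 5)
Your outline follows the same route as the paper: the signed broken circuit expansion, cancellation of the unbalanced contributions via the parity pairing ($0$-free with even $k$, $0$-included with odd $k$), switching each balanced component to an all-positive tree, and then the $(m-1)/\ln(1+\sqrt2)$ estimate of \cite{Wang} with the equality analysis $\alpha(e,L)=0$ forcing $L(u)=\sigma(e)L(v)$. Two points, however, are not merely ``bookkeeping'' and are left genuinely unresolved. First, your expansion must be over sets containing no broken \emph{circuit}, where a circuit is a balanced cycle \emph{or a barbell}; if you exclude only broken balanced cycles, an index set may contain two unbalanced cycles joined by a path, its unbalanced components are no longer unicyclic, $b(F)$ is no longer determined by $|F|$, and the uniform powers of $k$ on which the alternating-sum estimate relies disappear. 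Second, the residual sum does \emph{not} ``have exactly the form for which \cite{Wang} proves nonnegativity'': in the $0$-included case the families $\mathcal{C}_i(\Sigma)$ contain sets with unbalanced unicyclic components (indeed $\mathcal{C}_n(\Sigma)$ may be nonempty), which never occur in the unsigned broken-cycle complex of any graph, so the theorem of \cite{Wang} cannot be invoked as a black box. What actually transfers is its internal machinery --- the tree lemma bounding $k^{n-i}-\prod_j\beta(T_j,L)$ by $k^{n-i-1}\sum_{e\in F}\alpha(e,L)$, the count $\bigl|\{F:|F|=i,\ e\in F\}\bigr|=\binom{m-1}{i-1}$, the exact evaluation of the $i=1$ term, the sign pattern $\sum_i(-1)^{i-1}\alpha_i^t$ with $\alpha_i^t\ge 0$ allowing one to discard the odd terms with $i\ge 3$, and the resulting $1-\tfrac12\bigl(e^{(m-1)/k}-e^{-(m-1)/k}\bigr)$ bound --- and the paper reruns this estimate in full for the signed families rather than citing the unsigned result. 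Your proof is repairable along exactly these lines, but as written the central inequality is asserted rather than proved.
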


\section{The broken cycle theorem for $P(\Sigma, L)$}
Let $\Sigma$ be a signed graph and $L$ a list assignment of $\Sigma$. In this section, we will extend  the Whitney's broken cycle theorem to $P(\Sigma, L)$.

 For  $X\subset V(\Sigma)$, the  {\it switching} of $\Sigma$  at $X$ is the operation that reverses the sign of each edge  between $X$ and $V(\Sigma)-X$ (i.e., with one vertex in $X$ and the other in $V(\Sigma)-X$). It is clear that switching defines an equivalence relation on the set of all signed graphs on $G$. We say that two signed graphs  are {\it equivalent} if they can be obtained from each other by a switching at a vertex subset. The following result is a convenient tool to recognize a balanced graph.

\begin{thm}\cite{Zaslavsky2}\label{th2}
For any connected signed graph $(G,\sigma)$, the following assertions are equivalent.
\begin{wst}
\item[\rm (i).] $(G,\sigma)$ is balanced.
\item[\rm (ii).] $(G,\sigma)$ is equivalent to $(G,1)$, that is a signed graph without negative edges.
\item[\rm (iii).] There is a unique partition $X_1\cup X_2$ of $V(G)$  such that the edges between $X_1$ and $X_2$ are exactly all the negative edges of $(G,\sigma)$.
\end{wst}
\end{thm}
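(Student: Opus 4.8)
The plan is to prove the chain of implications (ii) $\Rightarrow$ (i) $\Rightarrow$ (ii) $\Leftrightarrow$ (iii), built on one key invariant: the balance of any individual cycle is unchanged by switching. Indeed, for a switching at $X$ and any cycle $C$, the edges of $C$ whose sign is reversed are exactly those joining $X$ to $V(G)-X$, and a cycle meets such a cut in an even number of edges; hence switching alters the number of negative edges of $C$ by an even amount and preserves its parity. Consequently balance of the whole graph is a switching invariant, which immediately gives (ii) $\Rightarrow$ (i): since the all-positive signature $(G,1)$ is balanced and switching preserves balance, any $(G,\sigma)$ equivalent to it is balanced.

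For the converse (i) $\Rightarrow$ (ii), I would fix a spanning tree $T$ of the connected graph $G$, root it at a vertex $r$, and define a potential $\mu(v)=\prod_{e\in P_v}\sigma(e)$, where $P_v$ is the $r$--$v$ path in $T$. Switching at $X=\{v:\mu(v)=-1\}$ turns every tree edge positive: for a tree edge $uv$ with $\mu(v)=\mu(u)\sigma(uv)$, the endpoints lie on opposite sides of $X$ precisely when $\sigma(uv)=-1$, so the reversal factor exactly cancels the original sign. It then remains to handle the non-tree edges. For such an edge $e=uv$, consider the fundamental cycle $C_e$ formed by $e$ together with the tree path between $u$ and $v$. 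By the invariant above, $C_e$ is still balanced after the switching; but all of its tree edges are now positive, so the parity of its negative edges is determined by $e$ alone, forcing $e$ to be positive as well. Thus the switching makes every edge positive and $(G,\sigma)\sim(G,1)$.

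The equivalence (ii) $\Leftrightarrow$ (iii) is then a matter of translating a switching set into a vertex partition. If switching at $X$ yields $(G,1)$, setting $X_1=X$ and $X_2=V(G)-X$ makes the negative edges of $(G,\sigma)$ exactly the edges reversed by the switching, i.e.\ exactly the $X_1$--$X_2$ edges; conversely, given such a partition, switching at $X_1$ flips precisely the negative edges to positive and leaves the rest positive. The one genuinely delicate point is the \emph{uniqueness} of the partition in (iii), and this is where connectivity is essential. I would argue that two switchings, at $X_1$ and at $Y_1$, produce the same signature if and only if switching at the symmetric difference $Z=X_1\triangle Y_1$ fixes every sign; since switching at $Z$ reverses exactly the edges crossing $Z$, no edge of $G$ may cross $Z$, and for a connected $G$ this forces $Z=\emptyset$ or $Z=V(G)$, so that $\{X_1,X_2\}=\{Y_1,Y_2\}$. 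I expect this uniqueness step to be the main obstacle, as it is the only place where connectivity is used in an essential way and where the group law of switchings (switching at $X$ and then at $Y$ equals switching at $X\triangle Y$) must be made precise.
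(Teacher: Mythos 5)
Your proof is correct; note that the paper itself offers no proof of this statement---it is quoted from Zaslavsky \cite{Zaslavsky2}---so there is nothing internal to compare against. Your argument is the standard one (the cycle--cut parity invariant, the spanning-tree potential for (i) $\Rightarrow$ (ii), and the symmetric-difference argument for uniqueness), and all steps, including the delicate uniqueness step via connectivity, are sound. The only presentational caveat is to say explicitly that the partition in (iii) is unordered and that one part may be empty (the case of no negative edges), which is exactly how your conclusion $\{X_1,X_2\}=\{Y_1,Y_2\}$ treats it.
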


For $X\subset V(\Sigma)$  and a list-assignment $L$ of $\Sigma$, the {\it switching} of $L$ at $X$ is the operation that reverses the signs of all colors in $L(v)$ for each $v\in X$. It is clear that if $L'$ is the assignment of $\Sigma$ obtained from $L$ by the switching at $X$ then $L'(v)=-L(v)$ if $v \in X$ and $L'(v)=L(v)$ otherwise. Further, if  $\Sigma'$ is the signed graph obtained from $\Sigma$ by the switching at $X$, then an $L$-coloring $c$ of $\Sigma$ induces naturally  an $L'$-coloring $c'$ of $\Sigma'$, where $c'(v)=-c(v)$ if $v\in X$ and $c'(v)=c(v)$ if $v\notin X$. Hence, $P(\Sigma, L)=P(\Sigma', L')$. Moreover, $L'$ is $0$-free (resp., $0$-included) if and only if $L$ is $0$-free (resp., $0$-included) and, hence, $P^0(\Sigma, L)=P^0(\Sigma', L')$ and $P^1(\Sigma, L)=P^1(\Sigma', L')$. Further, since $L'$ is a $k$-assignment if and only if $L$ is a $k$-assignment, $P_l(\Sigma, k)$, $P_l^0(\Sigma,k)$ and $P_l^1(\Sigma, k)$ are invariant under any switching of $\Sigma$.

Let $T$ be a balanced component of $\Sigma$. By Theorem \ref{th2},  $V(T)$ has a unique partition $X_1\cup X_2$ such that the edges between $X_1$ and $X_2$ are exactly all the negative edges of $T$.  Let $L_1$ and $L_2$ be the list assignments of $\Sigma$ obtained from $L$ by the switchings at $X_1$ and $X_2$, respectively.
\begin{prop}\label{3.1}
$$\left|\bigcap_{v\in V(T)} L_1(v)\right| =\left|\bigcap_{v\in V(T)} L_2(v)\right|.$$
\end{prop}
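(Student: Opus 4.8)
The plan is to reduce the claimed equality of cardinalities to the elementary fact that negation $x\mapsto -x$ is a bijection of $\mathbb{Z}$. First I would record explicitly how $L_1$ and $L_2$ act on the two parts $X_1$ and $X_2$. Since $L_i$ is obtained from $L$ by switching at $X_i$, and switching at a set $X$ negates exactly those lists $L(v)$ with $v\in X$ while leaving the others fixed, we have $L_1(v)=-L(v)$ for $v\in X_1$ and $L_1(v)=L(v)$ for $v\in X_2$, whereas $L_2(v)=L(v)$ for $v\in X_1$ and $L_2(v)=-L(v)$ for $v\in X_2$.

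The key observation is then that $L_2(v)=-L_1(v)$ for every $v\in V(T)$: for $v\in X_1$ we get $L_2(v)=L(v)=-(-L(v))=-L_1(v)$, and for $v\in X_2$ we get $L_2(v)=-L(v)=-L_1(v)$. Equivalently, $L_2$ is the switching of $L_1$ at the whole vertex set $V(T)=X_1\cup X_2$, since switchings compose by symmetric difference and $X_1\bigtriangleup X_2=V(T)$ as $X_1,X_2$ are disjoint. Taking intersections over $v\in V(T)$ and using that negation commutes with intersection gives
$$\bigcap_{v\in V(T)} L_2(v) \;=\; \bigcap_{v\in V(T)}\bigl(-L_1(v)\bigr) \;=\; -\bigcap_{v\in V(T)} L_1(v).$$
Since the map $S\mapsto -S=\{-a:a\in S\}$ is a cardinality-preserving bijection on the subsets of $\mathbb{Z}$, the two intersections have the same size, which is exactly the assertion.

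There is essentially no serious obstacle here; the only thing to be careful about is the bookkeeping that keeps the roles of $X_1$ and $X_2$ straight and the verification that switching $L$ at $X_1$ versus at $X_2$ differ precisely by a global negation on $V(T)$. I note that the balancedness of $T$ together with the uniqueness of the partition supplied by Theorem \ref{th2} is what makes $X_1$ and $X_2$ well-defined, but the identity itself would hold for \emph{any} partition of $V(T)$ into two disjoint parts; the hypotheses only serve to pin down the particular partition relevant to the later argument.
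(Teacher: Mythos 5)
Your proof is correct and follows essentially the same route as the paper: both arguments reduce to the observation that $L_1$ and $L_2$ differ by a global negation on $V(T)$ (the paper writes $\bigcap_{v\in V(T)}L_1(v)=\bigcap_{v\in V(T)}-L_2(v)$, you write $L_2(v)=-L_1(v)$ pointwise), and then invoke that $S\mapsto -S$ preserves cardinality. Your closing remark that the identity holds for any partition of $V(T)$ into two disjoint parts is also accurate.
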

\begin{proof}
Since $V(T)=X_1\cup X_2$, we have
\begin{align*}
\bigcap_{v\in V(T)} L_1(v)&=\left (\bigcap_{v\in X_1} -L(v) \right )\bigcap \left (\bigcap_{v\in V(T)-X_1} L(v) \right )\\
&=\left (\bigcap_{v\in V(T)-X_2} -L(v) \right )\bigcap \left (\bigcap_{v\in X_2} L(v) \right )\\
&=\bigcap_{v\in V(T)} -L_2(v).
\end{align*}
Note that $c\in \bigcap_{v\in V(T)} L_2(v)$ if and only if $-c \in \bigcap_{v\in V(T)} -L_2(v)$. So $\left|\bigcap_{v\in V(T)} L_1(v)\right| =\left|\bigcap_{v\in V(T)} L_2(v)\right|$, as desired.
\end{proof}

By Proposition \ref{3.1}, we write $\beta(T, L)=\left|\bigcap_{v\in V(T)} L_1(v)\right|=\left|\bigcap_{v\in V(T)} L_2(v)\right|$. Further,  we define $\gamma(\Sigma, L)$ as follows: $\gamma(\Sigma, L)=0$ if there is a vertex $v$ in an unbalanced component of $\Sigma$ such that $0\notin L(v)$, or $\gamma(\Sigma, L)=1$  otherwise. For $F\subseteq E(\Sigma)$, let $\langle F\rangle$ denote the spanning subgraph of $\Sigma$ with edge set $F$ and $b(F)$ the number of the balanced components of $\langle F\rangle$. Under these notations, $P(\Sigma, L)$ can be represented in the following inclusion-exclusion form.
\begin{lem}\label{lembcf}
$$P(\Sigma, L)=\sum_{F\subseteq E(\Sigma)} (-1)^{|F|} \gamma(\langle F\rangle, L) \prod_{j=1}^{b(F)}\beta(T_j,L),$$
where $T_1,T_2,\ldots, T_{b(F)}$ are all the balanced components of $\langle F\rangle$.
\end{lem}
\begin{proof}
For an edge $e=uv$ in $\Sigma$, let $A_e$ be the set of the non-proper vertex colorings $c$ of $\Sigma$ such that $c(w)\in L(w)$ for any $w\in V(\Sigma)$ and  $c(u)=\sigma(e)c(v)$. Then $P(\Sigma, L)$ equals the number of those vertex colorings that are not in $A_e$ for any $e\in E(\Sigma)$. So by the inclusion-exclusion principle, we have
\begin{equation}\label{prod}
P(\Sigma,L)=\sum_{F\subseteq E(\Sigma)} (-1)^{|F|} |\bigcap_{e\in F}A_e|=\sum_{F\subseteq E(\Sigma)} (-1)^{|F|} \prod_{T}\zeta(T,L),
\end{equation}
where the product is over all components $T$ of $\langle F\rangle$ and $\zeta(T,L)$ is the number of the non-proper vertex colorings $c$ of $T$ such that $c(w)\in L(w)$ for any $w\in V(T)$ and  $c(u)=\sigma(e)c(v)$ for each edge $uv\in E(T)$. Hence, it suffices to show that $\prod_{T}\zeta(T,L)= \gamma(\langle F\rangle, L)\prod_{j=1}^{b(F)}\beta(T_j,L)$ for any $F\subseteq E(\Sigma)$.

For a balanced component $T_i$ of $\langle F\rangle$, by Theorem \ref{th2},  $V(T_i)$ has a unique partition $X_1\cup X_2$ such that the edges between $X_1$ and $X_2$ are exactly all the negative edges of $T_i$. Let $T'_i$ and $L'$ be the signed graph and the list assignment obtained from $T_i$ and $L$ by switching at $X_1$, respectively. Notice that $T_i'$ has no negative edge and, hence, can be viewed as an unsigned graph. Therefore, the requirement `$c(u)=\sigma(e)c(v)$ for each $uv\in E(T_i)$' is equivalent to  `$c(u)=c(v)$ for each $uv\in E(T'_i)$', meaning that the colors of all vertices in $T_i'$ are the same. The number of such colorings is clearly equal to $|\bigcap_{v\in V(T'_i)} L'(v)|=\beta(T_i,L)$. Further, notice that, for any edge $e=uv$, a coloring $c$ satisfies $c(u)=\sigma(e)c(v)$ in $T_i$ if and only if $c(u)=c(v)$ in $T'_i$. This means that $\zeta(T_i,L)=\beta(T_i,L)$,   as desired.

For an unbalanced component $T$ of $\langle F\rangle$, we notice that $T$ contains an unbalanced cycle $C$. So by the definition of $A_e$, for any vertex coloring $c$ in $\bigcap_{e\in E(C)}A_e$, the color of every vertex on the cycle $C$  must be $0$ and, hence, the color of every vertex in $T$ must be $0$ as $T$ is connected. The number of such colorings in $T$ is clearly equal to 1 if $0\in\bigcap_{v\in T}L(v)$ or 0 if $0\notin\bigcap_{v\in T}L(v)$. This means that $\prod_{T}\zeta(T,L)= \gamma(\langle F\rangle, L)$, where  the product is over all unbalanced components $T$ of $\langle F\rangle$, again as desired.
\end{proof}

To extend Whitney's broken cycle theorem to $L$-colorings, we introduce the following result given by Dohmen and Trink.
\begin{lem}\label{lembc}\cite{Dohmen}
Let $P$ be a finite linearly ordered set, $\mathscr{B}\subseteq 2^P\backslash {\emptyset}$ and $\Gamma$ be an Abelian group. If $f$ is a mapping from $2^P$ to $\Gamma$ such that, for any $B\in \mathscr{B}$ and $A\supseteq B$,
\begin{align}\label{eqbc}
f(A)=f(A\backslash \{B_{\max}\}),
\end{align}
then
$$ \sum_{A \in 2^P}(-1)^{|A|}f(A)=\sum_{A\in 2^P\backslash \mathscr{B}^{-}} (-1)^{|A|} f(A),$$
where $B_{\max}$ is the maximum element in $B$ and $\mathscr{B}^{-}=\{A:A\in 2^P\text{ and } A\supseteq B\backslash\{B_{\max}\} \text{ for }$ $\text{some }B\in \mathscr{B}\}$.
\end{lem}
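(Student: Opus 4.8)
The plan is to prove the identity through a sign-reversing involution on the family $\mathscr{B}^{-}$ that preserves the value of $f$. Writing $B^{-}=B\setminus\{B_{\max}\}$ for the ``broken block'' of $B\in\mathscr{B}$, so that $\mathscr{B}^{-}=\{A\in 2^P: A\supseteq B^{-}\text{ for some }B\in\mathscr{B}\}$, the two sides of the claimed identity differ exactly by
$$\sum_{A\in\mathscr{B}^{-}}(-1)^{|A|}f(A),$$
and so it suffices to show that this sum vanishes. I would establish this by pairing the subsets in $\mathscr{B}^{-}$ so that partners carry equal $f$-values but opposite signs $(-1)^{|A|}$.

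To build the pairing I would use the linear order on $P$ in an essential way. For $A\in\mathscr{B}^{-}$ the set $\{B\in\mathscr{B}:B^{-}\subseteq A\}$ is nonempty, so I can define
$$x(A)=\min\{\,B_{\max}:B\in\mathscr{B},\ B^{-}\subseteq A\,\},$$
the smallest top-element among all broken blocks contained in $A$, and set $\phi(A)=A\,\triangle\,\{x(A)\}$ (delete $x(A)$ from $A$ if present, adjoin it otherwise). Fixing a block $B$ with $B_{\max}=x(A)$ and $B^{-}\subseteq A$, I note $x(A)\notin B^{-}$, so $B^{-}\subseteq\phi(A)$ and hence $\phi(A)\in\mathscr{B}^{-}$; moreover $A\cup\{x(A)\}\supseteq B^{-}\cup\{x(A)\}=B$, so the hypothesis \eqref{eqbc} applied to this $B$ gives $f(A\cup\{x(A)\})=f(A\setminus\{x(A)\})$, and since one of these two sets is $A$ and the other is $\phi(A)$, we conclude $f(\phi(A))=f(A)$. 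As $|\phi(A)|=|A|\pm1$, partners carry opposite signs and $\phi$ has no fixed point.

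The main point to verify, and the step I expect to be the real obstacle, is that $\phi$ is genuinely an involution, i.e. that $x(\phi(A))=x(A)$, so that applying $\phi$ twice toggles the same element back; this is precisely where minimality and the linear order pay off. If $x(A)\notin A$ then $\phi(A)\supseteq A$, and any block $B'$ newly satisfying $B'^{-}\subseteq\phi(A)$ must have $x(A)\in B'^{-}$, which forces $B'_{\max}>x(A)$ since $B'_{\max}\notin B'^{-}$; hence no new block can lower the minimum, and $x(\phi(A))=x(A)$. If instead $x(A)\in A$ then $\phi(A)\subseteq A$, so the admissible blocks only shrink and the minimum can only grow, yet the chosen $B$ still satisfies $B^{-}\subseteq\phi(A)$ and certifies the value $x(A)$; so again $x(\phi(A))=x(A)$. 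With $\phi$ confirmed to be a fixed-point-free, sign-reversing, $f$-preserving involution on $\mathscr{B}^{-}$, the terms cancel in pairs, the displayed sum equals $0$, and the identity follows.
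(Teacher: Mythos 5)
Your argument is correct and complete. The paper itself gives no proof of this lemma --- it is quoted verbatim from Dohmen and Trinks \cite{Dohmen} --- so there is nothing internal to compare against; what you have supplied is a self-contained, elementary proof in the classical Whitney/Blass--Sagan style: a sign-reversing, $f$-preserving, fixed-point-free involution on $\mathscr{B}^{-}$. All the delicate points check out. The value $x(A)=\min\{B_{\max}:B\in\mathscr{B},\,B^{-}\subseteq A\}$ is well defined because $A\in\mathscr{B}^{-}$ guarantees the set is nonempty and $\mathscr{B}$ excludes $\emptyset$, so $B_{\max}$ always exists; the certificate block $B$ with $B_{\max}=x(A)$ satisfies $x(A)\notin B^{-}$, which is exactly what makes both $\phi(A)\in\mathscr{B}^{-}$ and the application of the hypothesis to $A\cup\{x(A)\}\supseteq B$ legitimate; and your two-case verification that $x(\phi(A))=x(A)$ correctly uses the linear order (any block newly admissible after adjoining $x(A)$ must contain $x(A)$ strictly below its maximum, hence cannot lower the minimum). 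Since $\Gamma$ is an Abelian group, the pairwise cancellation $(-1)^{|A|}f(A)+(-1)^{|\phi(A)|}f(\phi(A))=0$ is valid there, and the identity follows. The one thing your proof buys over the citation is transparency: it makes visible that the lemma is nothing more than the standard broken-circuit involution abstracted away from graphs, with $\mathscr{B}$ playing the role of the circuits and $B^{-}$ the broken circuits.
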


In the following, we will apply  Lemma  \ref{lembc} to  $P(\Sigma,L)$. A {\it barbell} in a signed graph $\Sigma$ is the union of two unbalanced cycles $C_1$, $C_2$ and a (possibly trivial) path $P$ with end vertex $v_1\in V(C_1)$ and $v_2\in V(C_2)$, such that $C_1-v_1$ is disjoint from $P\cup C_2$ and $C_2-v_2$ is disjoint from $P\cup C_1$. A {\it circuit} in  $\Sigma$ is either a balanced cycle or a barbell.
Given a linear order on $E(\Sigma)$,  in Lemma \ref{lembc}, we specify  the set `$P$' by $E(\Sigma)$, `$\mathscr{B}$' by the set of circuits of $\Sigma$ and, for any $F\subseteq E(\Sigma)$, let $f(F)=\gamma(\langle F\rangle, L) \prod_{j=1}^{b(F)}\beta(T_j,L)$. It is clear that if $\langle F\rangle$ contains an element of $\mathscr{B}$, then $\langle F\rangle$ contains a balanced cycle or a barbell. Now we show \eqref{eqbc} holds for $f(F)$.

Assume that $\langle F\rangle$ contains a balanced cycle $C$ with the maximum edge $C_{\max}$. Without loss of generality, let $T_1$ be the component of $\langle F\rangle$ containing $C$. If $T_1$ is balanced, then $T_1-C_{\max}$ is balanced. Since  $C$ is a cycle and $C_{\max}$ is an edge on $C$, we have $\beta(T_1,L)=\beta(T_1-C_{\max},L)$. Moreover, balanced components have no effect on $\gamma(\langle F\rangle,L)$, which implies $\gamma(\langle F\rangle,L)=\gamma(\langle F\rangle-C_{\max},L)$. Hence,
$$ \gamma(\langle F\rangle, L)\prod_{j=1}^{b(F)}\beta(T_j,L)=\gamma(\langle F\rangle-C_{\max}, L)\beta(T_1-C_{\max},L)\left( \prod_{j=2}^{b(F)}\beta(T_j,L) \right),$$
that is, $f(F)=f(F\backslash\{C_{\max}\})$, as desired.
Further, we show that if  $T_1$ is unbalanced, then $T_1-C_{\max}$ must be unbalanced. In fact, let $C'$ be an unbalanced cycle of $T_1$. If $C_{\max}\notin E(C')$, then $T_1-C_{\max}$ is unbalanced. If $C_{\max}\in E(C')$,  then  we can verify that $C\cup C'-C_{\max}$ contains an unbalanced cycle. Since deleting edges will not change the list-assignment and each vertex is in an unbalanced component of $\langle F\rangle-C_{\max }$ if and only if it is in an unbalanced component of $\langle F\rangle$, we have $\gamma(\langle F\rangle,L)=\gamma(\langle F\rangle-C_{\max},L)$. So $f(F)=f(F\backslash\{C_{\max}\})$, as desired.

By the discussion above, \eqref{eqbc} is satisfied by any balanced cycle $C\in \mathscr{B}$ and $F\supseteq C$.

Assume now $F$ contains a barbell $B$. Let $T$ be the unbalanced component containing $B$. Notice that $B$ contains two unbalanced cycles. It follows that $T-B_{\max}$ is either an unbalanced component or two  unbalanced components of $\langle F\rangle-B_{\max}$.  Since deleting edges will not change the list-assignment and each vertex is in an unbalanced component of $\langle F\rangle-B_{\max }$ if and only if it is in an unbalanced component of $\langle F\rangle$, we have $\gamma(\langle F\rangle,L)=\gamma(\langle F\rangle-B_{\max},L)$.  So $f(F)=f(F\backslash\{B_{\max}\})$, again as desired. Hence, \eqref{eqbc} is satisfied by any barbell $B\in \mathscr{B}$ and $F\supseteq B$.

Given a linear order on $E(\Sigma)$, a {\it broken circuit} of $\Sigma$ is a set of edges obtained from the edge set of a circuit of $\Sigma$ by removing its maximum edge. Define a set system
$$\mathcal{C}(\Sigma)=\{F:F\subseteq E(\Sigma) \text{ and }  F \text{ contains no broken circuit}\}.$$
By the definition above, for any $F\in\mathcal{C}(\Sigma)$,  each component $T$ of $\langle F\rangle$ is a tree, or contains a unique and unbalanced cycle. Hence, $|E(T)|= |V(T)|-1$ if $T$ is a tree or $|E(T)|= |V(T)|$ if $T$ contains a  unique unbalanced cycle, meaning that $|F|\leq n$ and $b(F)=n-|F|$. So we can write
$$\mathcal{C}(\Sigma)=\mathcal{C}_0(\Sigma)\cup \mathcal{C}_1(\Sigma)\cup\cdots\cup \mathcal{C}_n(\Sigma),$$
where $\mathcal{C}_i(\Sigma)=\{F:F\in \mathcal{C}(\Sigma) \text{ and } |F|=i\}$ for $i=0,1,\ldots,n$. Further, if $\langle F\rangle$ is required to be balanced, then $\mathcal{C}_n(\Sigma)=\emptyset$. Define $\mathcal{C}_i^{*}(\Sigma)=\{F:F\in \mathcal{C}_i(\Sigma) \text{ and } \langle F\rangle \text{ is balanced}\}$ for $i=0,1,\ldots,n-1$.

\begin{thm}\label{th4}
Let $\Sigma$ be a signed graph with a linear order on $E(\Sigma)$ and $L$ a list-assignment of $\Sigma$. Then
\begin{align}\label{eqfl}
P(\Sigma, L)=\sum_{i=0}^{n-1} (-1)^i \sum_{F\in \mathcal{C}_i(\Sigma)}  \gamma(\langle F\rangle, L)\prod_{j=1}^{n-i}\beta(T_j,L)+\sum_{F\in \mathcal{C}_n(\Sigma)}(-1)^n\gamma(\langle F\rangle,L),
\end{align}
where $T_1,T_2,\ldots, T_{n-i}$ are all the balanced components of $\langle F\rangle$.

In particular, if $L$ is $0$-included, that is, $\gamma(\langle F\rangle,L)=1$ for any $F\subseteq E(\Sigma)$, then
\begin{align*}
P(\Sigma, L)=\sum_{i=0}^{n-1} (-1)^i \sum_{F\in \mathcal{C}_i(\Sigma)}\prod_{j=1}^{n-i}\beta(T_j,L)+\sum_{F\in \mathcal{C}_n(\Sigma)}(-1)^n.
\end{align*}

If $L$ is $0$-free, that is $\gamma(\langle F\rangle,L)=0$ for any unbalanced $\langle F\rangle$, then
\begin{align*}
P(\Sigma, L)=\sum_{i=0}^{n-1} (-1)^i \sum_{F\in \mathcal{C}_i^{*}(\Sigma)} \prod_{j=1}^{n-i}\beta(T_j,L).
\end{align*}
\end{thm}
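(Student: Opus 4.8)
The plan is to derive \eqref{eqfl} as a direct application of the Dohmen--Trinks identity (Lemma \ref{lembc}), once its hypothesis has been secured for our choice of data. Recall that we have set the ground set $P=E(\Sigma)$, taken $\mathscr{B}$ to be the family of circuits of $\Sigma$, fixed the Abelian group to be $\mathbb{Z}$, and defined $f(F)=\gamma(\langle F\rangle, L)\prod_{j=1}^{b(F)}\beta(T_j,L)$. The discussion preceding the theorem already verifies that \eqref{eqbc} holds for this $f$: for every balanced cycle and every barbell $B\in\mathscr{B}$ and every $F\supseteq B$ one has $f(F)=f(F\backslash\{B_{\max}\})$. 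Thus Lemma \ref{lembc} applies verbatim.

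First I would read off the two sides of the conclusion of Lemma \ref{lembc}. The left-hand side $\sum_{A\in 2^P}(-1)^{|A|}f(A)$ is exactly the inclusion--exclusion sum of Lemma \ref{lembcf}, hence equals $P(\Sigma,L)$. For the right-hand side the key is to recognize the index set $2^P\backslash\mathscr{B}^{-}$: by definition $A\in\mathscr{B}^{-}$ precisely when $A\supseteq B\backslash\{B_{\max}\}$ for some circuit $B$, i.e.\ when $A$ contains a broken circuit, so that $2^P\backslash\mathscr{B}^{-}=\mathcal{C}(\Sigma)$. Lemma \ref{lembc} therefore yields $P(\Sigma,L)=\sum_{F\in\mathcal{C}(\Sigma)}(-1)^{|F|}f(F)$.

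Next I would organize this sum by cardinality, writing $\mathcal{C}(\Sigma)=\bigcup_{i=0}^{n}\mathcal{C}_i(\Sigma)$, and invoke the structural fact established before the theorem: for $F\in\mathcal{C}(\Sigma)$ every component of $\langle F\rangle$ is a tree or carries a unique (necessarily unbalanced) cycle, so $b(F)=n-|F|$. Substituting $b(F)=n-i$ converts the contribution of $\mathcal{C}_i(\Sigma)$ into $(-1)^i\sum_{F\in\mathcal{C}_i(\Sigma)}\gamma(\langle F\rangle,L)\prod_{j=1}^{n-i}\beta(T_j,L)$, which is \eqref{eqfl}. The only boundary case to watch is $i=n$: then $b(F)=0$, the product over balanced components is empty and equals $1$, leaving the stated term $\sum_{F\in\mathcal{C}_n(\Sigma)}(-1)^n\gamma(\langle F\rangle,L)$.

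Finally the two specializations follow by inspecting $\gamma$. When $L$ is $0$-included, every list contains $0$, so $\gamma(\langle F\rangle,L)=1$ for all $F$ and the factor disappears. When $L$ is $0$-free, no list contains $0$, so $\gamma(\langle F\rangle,L)=1$ exactly when $\langle F\rangle$ has no unbalanced component, i.e.\ exactly when $\langle F\rangle$ is balanced; hence only the sets $F\in\mathcal{C}_i^{*}(\Sigma)$ survive, and since a balanced $F\in\mathcal{C}(\Sigma)$ spans a forest we have $|F|\le n-1$, so the index terminates at $i=n-1$. Because the genuine content, namely the validity of \eqref{eqbc}, has already been checked in the preceding paragraphs, I expect the main obstacle here to be purely bookkeeping: correctly identifying $2^P\backslash\mathscr{B}^{-}$ with $\mathcal{C}(\Sigma)$ and treating the degenerate $i=n$ term (together with its vanishing in the $0$-free case).
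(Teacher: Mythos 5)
Your proposal is correct and follows essentially the same route as the paper: it applies the Dohmen--Trinks identity (Lemma \ref{lembc}) with $f(F)=\gamma(\langle F\rangle,L)\prod_j\beta(T_j,L)$, whose hypothesis \eqref{eqbc} is exactly what the paper verifies for balanced cycles and barbells, identifies $2^P\backslash\mathscr{B}^{-}$ with $\mathcal{C}(\Sigma)$, and reindexes by cardinality via $b(F)=n-|F|$. The treatment of the $i=n$ term and of the two specializations ($0$-included and $0$-free) also matches the paper's argument.
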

{\bf Remark.} In Theorem \ref{th4}, since $\langle F\rangle$ contains no broken circuit, each component of $\langle F\rangle$ is either a tree, or contains a unique and unbalanced cycle. So each balanced component $T_j$ must be a tree.

In Theorem \ref{th4}, let $L$ be the $k$-assignment such that $L(v)=M_k$ for any vertex $v$ in $\Sigma$. Then $P(\Sigma,L)=P(\Sigma,k)$ and $L$ is $0$-included (resp., $0$-free) if $k$ is odd (resp., even). So we have the following corollary, which was introduced in \cite{Ren}.
\begin{cor}\label{cor}
Let $\Sigma$ be a signed graph with a linear order on $E(\Sigma)$. Then
\begin{align*}
P^1(\Sigma, k)=\sum_{i=0}^{n-1}(-1)^i \sum_{F\in \mathcal{C}_i(\Sigma)} k^{n-i}+\sum_{F\in \mathcal{C}_n(\Sigma)}(-1)^n
\end{align*}
and
\begin{align*}
P^0(\Sigma, k)=\sum_{i=0}^{n-1}(-1)^i \sum_{F\in \mathcal{C}_i^{*}(\Sigma)}k^{n-i}.
\end{align*}
\end{cor}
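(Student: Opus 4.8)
The plan is to apply Theorem \ref{th4} to the single distinguished $k$-assignment $L^*_k$ defined by $L^*_k(v)=M_k$ for every vertex $v$ of $\Sigma$. For this choice we have $P(\Sigma,L^*_k)=P(\Sigma,k)$ by definition, and the parity of $k$ decides which of the two specialized forms of Theorem \ref{th4} applies: since $0\in M_{2t+1}$ and $0\notin M_{2t}$, the assignment $L^*_k$ is $0$-included when $k$ is odd and $0$-free when $k$ is even. So I would split into these two cases and in each case invoke the corresponding displayed identity from Theorem \ref{th4}, which already eliminates the factors $\gamma(\langle F\rangle,L)$ (replacing them by $1$ in the $0$-included case and restricting the summation to $\mathcal{C}_i^{*}(\Sigma)$ in the $0$-free case).

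The one computation that remains is to evaluate $\beta(T_j,L^*_k)$ for an arbitrary balanced component $T_j$ of some $\langle F\rangle$. Here is where the key observation enters: each set $M_k$ is invariant under negation, i.e. $-M_k=M_k$, because $M_{2t+1}=\{0,\pm1,\dots,\pm t\}$ and $M_{2t}=\{\pm1,\dots,\pm t\}$ are both symmetric about $0$. Consequently, if $L_1$ is obtained from $L^*_k$ by the switching at the part $X_1$ associated with $T_j$ via Theorem \ref{th2}, then $L_1(v)=-M_k=M_k$ for $v\in X_1$ and $L_1(v)=M_k$ for $v\notin X_1$; that is, the switching does not change the assignment at all. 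Therefore
$$\beta(T_j,L^*_k)=\left|\bigcap_{v\in V(T_j)}L_1(v)\right|=|M_k|=k.$$

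With $\beta(T_j,L^*_k)=k$ in hand, each product $\prod_{j=1}^{n-i}\beta(T_j,L^*_k)$ collapses to $k^{n-i}$, and substituting this into the $0$-included form of Theorem \ref{th4} yields the stated formula for $P^1(\Sigma,k)$, while substituting into the $0$-free form yields the formula for $P^0(\Sigma,k)$. I do not expect any genuine obstacle here, as the argument is a direct specialization; the only substantive point — and the step I would flag explicitly — is the symmetry $-M_k=M_k$, which is exactly what forces $\beta$ to be constantly equal to $k$ and thereby recovers the monomials $k^{n-i}$ from the general weighted sum.
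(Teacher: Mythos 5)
Your proposal is correct and follows essentially the same route as the paper: specialize Theorem \ref{th4} to the constant assignment $L^*_k(v)=M_k$, note that parity of $k$ decides whether it is $0$-included or $0$-free, and use the symmetry $-M_k=M_k$ to get $\beta(T_j,L^*_k)=|M_k|=k$. The paper states this as an immediate consequence without spelling out the $\beta$ computation; your explicit verification of that step is the only difference and is entirely consistent with the intended argument.
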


\section{The proof of Theorem \ref{th0}}
In this section, we apply Theorem \ref{th4} to prove Theorem \ref{th0}. Since the number of $L$-colorings of a signed graph equals the product of  that of its components, in the following we only consider the case that $\Sigma$ is connected. For any positive integer $k$, let $L_k$ be a $k$-assignment that attains $P(\Sigma,L_k)=P_l(\Sigma,k)$ and recall that $L^*_k$ is the $k$-assignment such that $L^*_k(v)=M_k$ for every vertex $v$ in $\Sigma$. Hence, $P(\Sigma,L^*_k)=P(\Sigma,k)$.




 For each $F\in \mathcal{C}_1(\Sigma)$, $\langle F\rangle$ consists of one edge $e$ and $n-2$ isolated vertices.  For each edge $e=uv\in E(\Sigma)$, let $\alpha(e,L_k)=k-|L_k(u)\cap L_k(v)|$ if $e$ is positive and $\alpha(e,L_k)=k-|(-L_k(u))\cap L_k(v)|$ if $e$ is negative. Then we have
\begin{align}
\sum_{F\in \mathcal{C}_1(\Sigma)} \prod_{j=1}^{n-1}\beta(T_j,L_k)=\sum_{e\in E(\Sigma)} (k-\alpha(e,L_k))k^{n-2}=m k^{n-1}-\sum_{e\in E(\Sigma)} \alpha(e,L_k) k^{n-2}. \label{eqa1}
\end{align}

For each $F\in \mathcal{C}_2(\Sigma)$, $\langle F\rangle$ consists of either a path of length $2$ and $n-3$ isolated vertices, or two independent edges and $n-4$ isolated vertices. Let $\mathcal{P}(\Sigma)$ (resp., $\mathcal{P}'(\Sigma)$) be the set of the paths of length $2$ (resp., two independent edges) in $\Sigma$ and $m_1=|\mathcal{P}(\Sigma)|$ (resp., $m_2=|\mathcal{P}'(\Sigma)|$). Then
\begin{align}
\sum_{F\in \mathcal{C}_2(\Sigma)} \prod_{j=1}^{n-2}\beta(T_j,L_k)
=\sum_{P\in \mathcal{P}(\Sigma)}\beta(P,L_k)k^{n-3}+\sum_{e_1\cup e_2\in \mathcal{P}'(\Sigma)} \beta(e_1,L_k)\beta(e_2,L_k)k^{n-4}.\label{eqa2}
\end{align}
For $P=uvw\in \mathcal{P}(\Sigma)$, we have
\begin{align*}
\beta(P,L_k)=&\ \left | (\sigma(uv) L_k(u)) \cap L_k(v)\cap(\sigma(vw) L_k(w))\right | \\
=&\ \left |(\sigma(uv) L_k(u))\cap L_k(v)\right |+\left |L_k(v) \cap (\sigma(vw) L_k(w))\right |\\
&\ -\left | \left ((\sigma(uv) L_k(u))\cup (\sigma(vw) L_k(w))\right )\cap L_k(v)\right |\\
\geq&\ \left (k-\alpha(uv,L_k)\right )+\left (k-\alpha(vw,L_k)\right )-k\\
=&\ k-(\alpha(uv,L_k)+\alpha(vw,L_k)).
\end{align*}
For $P=e_1\cup e_2\in \mathcal{P}'(\Sigma)$, we have
\begin{align*}
\beta(e_1,L_k)\beta(e_2,L_k)=&\ (k-\alpha(e_1,L_k))(k-\alpha(e_2,L_k))\\
=&\ k^2-\left (\alpha(e_1,L_k)+\alpha(e_2,L_k)\right )k+\alpha(e_1,L_k)\alpha(e_2,L_k)\\
\geq&\ k^2-\left (\alpha(e_1,L_k)+\alpha(e_2,L_k)\right )k.
\end{align*}

Hence, by \eqref{eqa2}, it follows that
\begin{align}
\sum_{F\in \mathcal{C}_2(\Sigma)} \prod_{j=1}^{n-2}\beta(T_j,L_k)
\geq&\ m_1 k^{n-2}-\sum_{uvw \in \mathcal{P}(\Sigma)} \left (\alpha(uv,L_k)+\alpha(vw,L_k)\right )k^{n-3}\notag \\
&\ + m_2 k^{n-2}- \sum_{e_1\cup e_2\in \mathcal{P}'(\Sigma)} \left (\alpha(e_1,L_k)+\alpha(e_2,L_k)\right )k^{n-3}\notag \\
= &\ (m_1+m_2) k^{n-2}-\sum_{e=uv\in E(\Sigma)} \alpha(e,L_k)(d(u)+d(v)-2) k^{n-3}\notag \\
&\ -\sum_{e=uv\in E(\Sigma)} \alpha(e,L_k)(m-d(u)-d(v)+1) k^{n-3}\notag \\
= &\ (m_1+m_2) k^{n-2}- \sum_{e\in E(\Sigma)} \alpha(e,L_k) (m-1) k^{n-3}, \label{eqb2}
\end{align}
where $d(u)$ and $d(v)$ are the degrees of $u$ and $v$, respectively, and the first equality holds because $\alpha(e,L_k)$ occurs $d(u)+d(v)-2$ times in $\sum_{uvw \in \mathcal{P}(\Sigma)} \left (\alpha(uv,L_k)+\alpha(vw,L_k)\right )$ and $m-d(u)-d(v)+1$ times in $\sum_{e_1\cup e_2\in \mathcal{P}'(\Sigma)} \left (\alpha(e_1,L_k)+\alpha(e_2,L_k)\right )$ for each $e\in E(\Sigma)$.

By \eqref{eqfl} and the classical Bonferroni inequality (also known as inclusion-exclusion inequality),
$$P(\Sigma,L_k)\geq \sum_{i=0}^{3} (-1)^i \sum_{F\in \mathcal{C}_i(\Sigma)}  \gamma(\langle F\rangle, L_k)\prod_{j=1}^{b(F)}\beta(T_j,L_k).$$
Note that $\gamma(\langle F\rangle),L_k)=1$ for each $F\in \bigcup_{i=0}^{2}\mathcal{C}_i$. Together with \eqref{eqa1} and \eqref{eqb2}, we have
\begin{align*}
P(\Sigma,L_k)\geq &k^n-m k^{n-1}+\sum_{e\in E(\Sigma)} \alpha(e,L_k) k^{n-2}+(m_1+m_2) k^{n-2}\\
& - \sum_{e\in E(\Sigma)} \alpha(e,L_k) (m-1) k^{n-3}- \sum_{F\in \mathcal{C}_3(\Sigma)}  \gamma(\langle F\rangle, L_k)\prod_{j=1}^{b(F)}\beta(T_j,L_k)\\
\geq & k^n-m k^{n-1}+(m_1+m_2) k^{n-2}+ \sum_{e\in E(\Sigma)} \alpha(e,L_k) (k-m+1) k^{n-3}
-\sum_{F\in \mathcal{C}_3(\Sigma)} k^{n-3}\\
\geq & k^n-m k^{n-1}+(m_1+m_2) k^{n-2}+ \sum_{e\in E(\Sigma)} \alpha(e,L_k) (k-m+1) k^{n-3}- |\mathcal{C}_3| k^{n-3} \\
\geq & k^n-m k^{n-1}+(m_1+m_2) k^{n-2}+ \sum_{e\in E(\Sigma)} \alpha(e,L_k) (k-m+1) k^{n-3}- \binom{m}{3} k^{n-3}.
\end{align*}

On the other hand, again by \eqref{eqfl} and the Bonferroni inequality, we have
\begin{align*}
P(\Sigma,L^*_k)\leq & k^n-m k^{n-1}+(m_1+m_2) k^{n-2}-\sum_{F\in \mathcal{C}_3(\Sigma)} \gamma(\langle F\rangle,L^*_k) k^{n-3}+\sum_{F\in \mathcal{C}_4(\Sigma)} \gamma(\langle F\rangle,L^*_k) k^{n-4}\\
\leq & k^n-m k^{n-1}+(m_1+m_2) k^{n-2}+\sum_{F\in \mathcal{C}_4(\Sigma)} k^{n-4}\\
\leq & k^n-m k^{n-1}+(m_1+m_2) k^{n-2}+\binom{m}{4} k^{n-3}.
\end{align*}

Hence,
\begin{align*}
P(\Sigma, L_k)-P(\Sigma,L^*_k)\geq  (k-m+1) k^{n-3}\sum_{e\in E(\Sigma)} \alpha(e,L_k)-  \left(\binom{m}{3}+\binom{m}{4}\right) k^{n-3}.
\end{align*}

If $\sum_{e\in E(\Sigma)} \alpha(e,L_k) \geq 1$, then  $P(\Sigma, L_k)-P(\Sigma,L^*_k)> 0$ as  $k> \binom{m}{3}+\binom{m}{4}+m-1$. Recall that  $P(\Sigma, L_k)= P_l(\Sigma,k)$ and $P_l(\Sigma, k)\leq P(\Sigma,L^*_k)$. This is a contradiction.

Hence, the only possibility is $\sum_{e\in E(\Sigma)} \alpha(e,L_k)=0$,  meaning that $\alpha(e,L_k)=0$ for each edge $e\in E(\Sigma)$.
So $L_k(u)=\sigma(e)L_k(v)$ for each edge $e=uv$. If $\Sigma$ is unbalanced, then let $C$ be an unbalanced cycle of $\Sigma$. As $C$ has an odd number of negative edges and $\alpha(e,L_k)=0$ for each $e\in E(\Sigma)$, we have $L_k(v)=-L_k(v)$ for any $v\in V(C)$. Further, since $\Sigma$ is connected,  $L_k(v)=-L_k(v)$ for any $v\in V(\Sigma)$. This implies $L_k=L^*_k$ by relabeling the colors of $L_k$. Hence, $P_l(\Sigma,k)=P(\Sigma,L_k)=P(\Sigma,L^*_k)=P(\Sigma,k)$, as desired. This completes our proof of Theorem \ref{th0}.

\section{The proof of Theorem \ref{th1}}
In this section, we give the proof of Theorem \ref{th1} by using Theorem \ref{th4} and Corollary \ref{cor}. Let $\Sigma$ be a signed graph of order $n$ with $m$ edges. By Theorem \ref{th4} and Corollary \ref{cor}, for $0$-included $k$-assignment $L^1$ and $0$-free $k$-assignment $L^0$, we have
\begin{align}\label{eql1}
P(\Sigma, L^1)-P^1(\Sigma, k)=\sum_{i=0}^{n-1}(-1)^{i-1} \sum_{F\in \mathcal{C}_i(\Sigma)} \left(k^{n-i}-\prod_{j=1}^{n-i}\beta(T_j,L^1)\right)
\end{align}
and
\begin{align}\label{eql0}
P(\Sigma, L^0)-P^0(\Sigma, k)=\sum_{i=0}^{n-1} (-1)^{i-1} \sum_{F\in \mathcal{C}_i^{*}(\Sigma)}\left(k^{n-i}-\prod_{j=1}^{n-i}\beta(T_j,L^0)\right),
\end{align}
where $T_1,T_2,\ldots,T_{n-i}$ are all the balanced components of $\langle F\rangle$.  Let $L$ be a $k$-assignment of $\Sigma$. For $F\subseteq E(\Sigma)$ and a balanced component $T_j$ of $\langle F\rangle$, there is a unique partition $X_j\cup (V(T_j)-X_j)$ of $V(T_j)$ such that the edges between $X_j$ and $V(T_j)-X_j$ are exactly all the negative edges in $T_j$ by Theorem \ref{th2}.  Let $T_j'$ be obtained from $T_j$ by the switching at  $X_j$ and $L'$ be the $k$-assignment obtained from $L$ by the switching at $\bigcup_{j=1}^{n-i} X_j$.
Then  $T_j'$ has no negative edges and $\beta(T_j, L)=\left|\bigcap_{v\in V(T_j')} L'(v)\right|$. Further, by the Remark of Theorem \ref{th4}, $T_j$ is a tree, and so is $T'_j$. The following inequality for unsigned trees (trees without negative edge) was given by two of the present authors \cite{Wang}, which is also valid for $T_j'$ because $T_j'$ contains no negative edge.

\begin{lem}\label{lemwang} (\cite{Wang}, Lemma 6). For any unsigned trees $T_1', \ldots, T_{n-i}'$,
$$k^{n-i}-\prod_{j=1}^{n-i}\left |\bigcap_{v\in V(T_j')} L'(v)\right | \leq k^{n-i-1}\sum_{j=1}^{n-i} \sum_{uv\in E(T'_j)} \left ( k- |L'(u)\cap L'(v)|\right ).$$
\end{lem}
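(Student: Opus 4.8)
The plan is to split the claimed inequality into two independent estimates: a per-tree bound and a bound that assembles the per-tree quantities into the product. Throughout I abbreviate $N:=n-i$ and, for each $j$, set $\beta_j:=\bigl|\bigcap_{v\in V(T_j')}L'(v)\bigr|$; note $0\le\beta_j\le k$, since the common intersection is contained in any single list $L'(w)$, which has size $k$. In this notation the target is
\[
k^{N}-\prod_{j=1}^{N}\beta_j\ \le\ k^{N-1}\sum_{j=1}^{N}\sum_{uv\in E(T_j')}\bigl(k-|L'(u)\cap L'(v)|\bigr),
\]
and I would obtain it by chaining the two steps below.

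For the per-tree bound I would show, for a single unsigned tree $T$, that $k-\bigl|\bigcap_{v\in V(T)}L'(v)\bigr|\le\sum_{uv\in E(T)}(k-|L'(u)\cap L'(v)|)$, arguing by induction on $|V(T)|$ through leaf peeling. Choosing a leaf $w$ with neighbour $u$, writing $T^-=T-w$ and $A=\bigcap_{v\in V(T^-)}L'(v)$, the key observation is that $u\in V(T^-)$ forces $A\subseteq L'(u)$, whence $A\setminus L'(w)\subseteq L'(u)\setminus L'(w)$ and so $|A|-|A\cap L'(w)|\le k-|L'(u)\cap L'(w)|$. Since $\bigcap_{v\in V(T)}L'(v)=A\cap L'(w)$, adding this to the inductive estimate for $T^-$ gives the bound for $T$. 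Summing over $j$ then yields $\sum_{j}(k-\beta_j)\le\sum_{j}\sum_{uv\in E(T_j')}(k-|L'(u)\cap L'(v)|)$.

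For the assembling step it remains to prove $k^{N}-\prod_{j=1}^{N}\beta_j\le k^{N-1}\sum_{j=1}^{N}(k-\beta_j)$. Setting $x_j:=\beta_j/k\in[0,1]$ and dividing by $k^{N}$, this is the elementary inequality $1-\prod_{j=1}^{N}x_j\le\sum_{j=1}^{N}(1-x_j)$ for $x_j\in[0,1]$, proved by a short induction: writing $P=\prod_{j<N}x_j\le1$, one has $1-x_NP=(1-P)+P(1-x_N)\le(1-P)+(1-x_N)$, and the inductive hypothesis bounds $1-P$ by $\sum_{j<N}(1-x_j)$. Chaining this with the per-tree bound gives the lemma, since $k^{N-1}\sum_j(k-\beta_j)\le k^{N-1}\sum_j\sum_{uv\in E(T_j')}(k-|L'(u)\cap L'(v)|)$.

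I expect the only genuinely delicate point to be the leaf-peeling step: the containment $A\subseteq L'(u)$ is exactly what converts the ``global'' loss incurred by intersecting in the list at $w$ into the ``local'' single-edge loss $k-|L'(u)\cap L'(w)|$, and without it the induction does not close; everything else is bookkeeping. I would also note that the degenerate case $\beta_j=0$ causes no trouble, as then the left side equals $k^{N}$ while the right side already contains a summand $k^{N-1}\cdot k$.
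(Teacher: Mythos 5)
Your proof is correct. Note that the paper does not prove this lemma itself --- it is imported verbatim from the cited reference (\cite{Wang}, Lemma~6) --- so there is no in-paper argument to compare against; your two-step route (the leaf-peeling induction giving $k-\beta_j\le\sum_{uv\in E(T_j')}\bigl(k-|L'(u)\cap L'(v)|\bigr)$ for each tree, where the containment $\bigcap_{v\in V(T^-)}L'(v)\subseteq L'(u)$ localizes the loss to a single edge, followed by the elementary product inequality $1-\prod_{j}x_j\le\sum_{j}(1-x_j)$ on $[0,1]^{N}$) is exactly the standard argument and matches the proof in that reference. One cosmetic remark: your closing comment on the degenerate case $\beta_j=0$ is superfluous, since both of your main steps are already valid there; moreover, the assertion that the right-hand side ``already contains a summand $k^{N-1}\cdot k$'' is itself only a consequence of your per-tree bound (pairwise intersections along the edges of $T_j'$ can all be nonempty even when the global intersection is empty), so it should not be presented as an independently visible fact.
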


By the definition of switching of $L$, for $uv\in E(T_j')$, we have
$$
|L'(u)\cap L'(v)|=
\left \{
\begin{aligned}
|L(u)\cap L(v)|,&\quad \text{if } u,v \notin X_j,\\
|(-L(u))\cap L(v)|,&\quad \text{if } u\in X_j \text{ and } v\notin X_j,\\
|L(u)\cap(-L(v))|,&\quad \text{if } u\notin X_j \text{ and } v\in X_j,\\
|(-L(u))\cap(-L(v))|,&\quad \text{if } u,v \in X_j.
\end{aligned}
\right.
$$
For each edge $e=uv\in E(\Sigma)$, recall that $\alpha(e,L)=k-|L(u)\cap L(v)|$ if $e$ is positive and $\alpha(e,L)=k-|(-L(u))\cap L(v)|$ if $e$ is negative. Further, notice that $e=uv$ is negative if and only if exact one of $\{u,v\}$ is in $X_j$. Combining with the fact  $|L(u)\cap L(v)|=|(-L(u))\cap(-L(v))|$ and $|(-L(u))\cap L(v)|=|L(u)\cap(-L(v))|$, one has $k-|L'(u)\cap L'(v)|=\alpha(e,L)$. Hence, by Lemma \ref{lemwang},
\begin{align}\label{eql}
k^{n-i}-\prod_{j=1}^{n-i}\beta(T_j,L) \leq& k^{n-i-1}\sum_{j=1}^{n-i}\sum_{e\in E(T_j)}\alpha(e,L)\notag \\
\leq&k^{n-i-1} \sum_{e\in F}\alpha(e,L).
\end{align}

Let $\mathcal{E}_i=\{F: F\subseteq E(\Sigma)\text{ and }|F|=i\}$ and $\mathcal{E}_i^e=\{F: F\subseteq \mathcal{E}_i\text{ and }e\in F\}$ for each edge $e\in E(\Sigma)$. It is clear that $\mathcal{C}_i\subseteq \mathcal{E}_i$, $\mathcal{C}_i^{*}\subseteq \mathcal{E}_i$ and $|\mathcal{E}_i^e|=\binom{m-1}{i-1}$. Let $\alpha_i^1=\sum_{F\in \mathcal{C}_i(\Sigma)} \left (k^{n-i}-\prod_{j=1}^{n-i}\beta(T_j,L^1) \right )$ and $\alpha_i^{0}= \sum_{F\in \mathcal{C}_i^{*}(\Sigma)}\left(k^{n-i}- \prod_{j=1}^{n-i}\beta(T_j, L^0)\right)$. Clearly, $\alpha_i^1\geq 0$ and $\alpha_i^0\geq 0$.

 According to \eqref{eql}, we have
\begin{align*}
\alpha_i^1&\leq \sum_{F\in \mathcal{C}_i(\Sigma)}\left ( k^{n-i-1}\sum_{e\in E(F)}\alpha(e,L^1)\right )\\
&\leq  k^{n-i-1}\sum_{F\in \mathcal{E}_i(\Sigma)}\sum_{e\in E(F)}\alpha(e, L^1)\\
&= k^{n-i-1}\sum_{e\in E(\Sigma)}\sum_{F\in \mathcal{E}_i^e(\Sigma)} \alpha(e, L^1)\\
&= k^{n-i-1} \binom{m-1}{i-1} \sum_{e\in E(\Sigma)} \alpha(e, L^1).
\end{align*}
By a similar argument, $\alpha_i^{0}\leq k^{n-i-1} \binom{m-1}{i-1} \sum_{e\in E(\Sigma)} \alpha(e, L^0).$ Further, by a direct calculation, it can be found that $\alpha_0^t=0$ and $\alpha_1^t=k^{n-2}\sum_{e\in E(\Sigma)}\alpha(e,{L^t})$, where $t\in\{0,1\}$.

Hence, for $t\in \{0,1\}$, combining with \eqref{eql1} and \eqref{eql0}, we have
\begin{align*}
P(\Sigma, L^t)-P^t(\Sigma,k)&=\sum_{i=0}^{n-1}(-1)^{i-1} \alpha_i^{t}\\
&=\alpha_1^{t}+\sum_{i=2}^{n-1}(-1)^{i-1} \alpha_i^{t}\\
&\geq k^{n-2}\sum_{e\in E(\Sigma)}\alpha(e, L^t)-\sum_{2\leq i\leq n-1\atop i \text{ is even}} k^{n-i-1}\binom{m-1}{i-1}\sum_{e\in E(\Sigma)}\alpha(e, L^t)\\
&= \sum_{e\in E(\Sigma)}\alpha(e, L^t) \left ( k^{n-2}-\sum_{1\leq i\leq n-2\atop i \text{ is odd}}k^{n-i-2}\binom{m-1}{i}\right)\\
&= \sum_{e\in E(\Sigma)}\alpha(e, L^t) k^{n-2}\left ( 1-\sum_{1\leq i\leq n-2\atop i \text{ is odd}}\binom{m-1}{i}k^{-i}\right)\\
&\geq \sum_{e\in E(\Sigma)}\alpha(e, L^t) k^{n-2}\left ( 1-\sum_{1\leq i\leq n-2\atop i \text{ is odd}}\frac{1}{i!}\left (\frac{m-1}{k}\right )^{i}\right)\\
&\geq  \sum_{e\in E(\Sigma)}\alpha(e, L^t) k^{n-2}\left (1-\frac{1}{2}\left( \exp \left( \frac{m-1}{k} \right)-\exp \left(- \frac{m-1}{k} \right) \right) \right).
\end{align*}

Since the function $1-\left( \exp (x)-\exp (-x) \right)/2$ is monotone decreasing with unique zero $\ln (1+\sqrt{2})$, we have $P(\Sigma, L^t)-P^t(\Sigma,k)\geq 0$ when $k> (m-1)/\ln (1+\sqrt{2})$, where the equality holds if and only if $\sum_{e\in E(\Sigma)}\alpha(e, L^t)=0$. If $\sum_{e\in E(\Sigma)}\alpha(e,L^t)\not=0$, i.e., $\sum_{e\in E(\Sigma)} \alpha(e,L^t) \geq 1$, then  $P(\Sigma, L^t)-P(\Sigma,L^*_k)> 0$ when $k> (m-1)/\ln (1+\sqrt{2})$.

We now assume that $\sum_{e\in E(\Sigma)} \alpha(e,L^t)=0$. Then $\alpha(e,L^t)=0$ for each edge $e\in E(\Sigma)$.
So $L^t(u)=\sigma(e)L^t(v)$ for each edge $e=uv$. If $\Sigma$ is unbalanced, then let $C$ be an unbalanced cycle of $\Sigma$.  As $C$ has an odd number of negative edges and $\alpha(e,L^t)=0$ for each $e\in E(\Sigma)$, we have $L^t(v)=-L^t(v)$ for any $v\in V(C)$. Further, since $\Sigma$ is connected,  $L^t(v)=-L^t(v)$ for any $v\in V(\Sigma)$. This implies $L^t=L^*_k$ by relabeling the colors of $L^t$. Hence, $P(\Sigma,L^t)=P(\Sigma,L^*_k)=P(\Sigma,k)$.

 The argument above shows that $P(\Sigma,L^t)\geq P^t(\Sigma,k)$ and the equality holds if and only if $L^t=L^*_k$. This completes our proof of Theorem \ref{th1}.

\section{Acknowledgements}
This work was supported by the National Natural Science Foundation of China [Grant numbers, 11971406, 12171402] and the National Science Fund for Distinguished Young Scholars [Grant numbers, 12001006].

\end{document}